\documentclass[11pt]{amsart}
\author[C.~Sanna]{Carlo Sanna$^\dagger$}
\address{\parbox{\linewidth}{
		Department of Mathematical Sciences, Politecnico di Torino\\
		Corso Duca degli Abruzzi 24, 10129 Torino, Italy\\[-8pt]}}
\email{carlo.sanna@polito.it}
\thanks{$\dagger$ C.~Sanna is a member of the INdAM group GNSAGA and of CrypTO, the group of Cryptography and Number Theory of the Politecnico di Torino}

\keywords{congruences; Fibonacci numbers; Lucas numbers; modular arithmetic; modular multiplicative inverse}
\subjclass[2020]{Primary: 11B39, Secondary: 11A99.}
% 11B39 Fibonacci and Lucas numbers and polynomials and generalizations
% 11A99 Elementary number theory,  None of the above, but in this section

\title[Inverse of a Fibonacci modulo a Fibonacci being a Fibonacci]{On the inverse of a Fibonacci number modulo a Fibonacci number being a Fibonacci number}

\usepackage{amsmath}
\usepackage{amssymb}
\usepackage{amsthm}
\usepackage{geometry}
\geometry{left=1.15in, right=1.15in, top=.72in, bottom=.72in}
\usepackage{color}
\usepackage{hyperref}
\hypersetup{colorlinks=true}
\usepackage{enumitem}
\setlist[enumerate]{label=(\roman*),labelindent=1em,itemsep=0.5em,topsep=0.5em}

\newtheorem{theorem}{Theorem}[section]

\newtheorem{lemma}[theorem]{Lemma}
\theoremstyle{remark}

\uchyph=0

\begin{document}

\maketitle

\begin{abstract}
	Let $(F_n)_{n \geq 1}$ be the sequence of Fibonacci numbers.
	For all integers $a$ and $b \geq 1$ with $\gcd(a, b) = 1$, let $[a^{-1} \!\bmod b]$ be the multiplicative inverse of $a$ modulo $b$, which we pick in the usual set of representatives $\{0, 1, \dots, b-1\}$.
	Put also $[a^{-1} \!\bmod b] := \infty$ when $\gcd(a, b) > 1$.

	We determine all positive integers $m$ and $n$ such that $[F_m^{-1} \bmod F_n]$ is a Fibonacci number.
	This extends a previous result of Prempreesuk, Noppakaew, and Pongsriiam, who considered the special case $m \in \{3, n - 3, n - 2, n - 1\}$ and $n \geq 7$.

	Let $(L_n)_{n \geq 1}$ be the sequence of Lucas numbers.
	We also determine all positive integers $m$ and $n$ such that $[L_m^{-1} \bmod L_n]$ is a Lucas number.
\end{abstract}

\section{Introduction}

Let $(F_n)_{n \geq 1}$ be the sequence of Fibonacci numbers, which is defined by $F_1 := 1$, $F_2 := 1$, and $F_n := F_{n - 1} + F_{n - 2}$ for every integer $n \geq 3$.
Several authors studied modular multiplicative inverses related to Fibonacci numbers.
For instance, Komatsu, Luca, and Tachiya~\cite{MR3055682} (see also~\cite{MR3459557}) studied the multiplicative order of $F_{n+1}F_n^{-1}$ modulo $F_m$, where $m$ and $n$ are positive integers such that $\gcd(F_m, F_{n+1}F_n) = 1$.
Luca, St\u{a}nic\u{a}, and Yal\c{c}iner~\cite{MR3246778} studied the positive integers $M$ such that the invertible residue classes modulo $M$ represented by Fibonacci numbers form a subgroup.
For all integers $a$ and $b \geq 1$ with $\gcd(a, b) = 1$, let $[a^{-1} \bmod b]$ be the unique $x \in \{0,1, \dots, b-1\}$ such that $ax \equiv 1 \pmod b$.
For the sake of convenience, put also $[a^{-1} \bmod b] := \infty$ when $\gcd(a, b) > 1$.
Alecci, Murru, and Sanna~\cite{MR4574177} determined the Zeckendorf representation of $[a^{-1} \bmod F_n]$, for every fixed $a \geq 3$ and for every integer $n \geq 1$.
(The case $a = 2$ was previously solved by Prempreesuk, Noppakaew, and Pongsriiam~\cite{MR4057296}).
Motivated by some results in knot theory~\cite{MR3264672}, Song~\cite{zbMATH07142168} found four families of pairs $(F_m, F_n)$ of Fibonacci numbers such that $[F_m^{-1} \bmod F_n]$ and $[F_n^{-1} \bmod F_m]$ are both Fibonacci numbers.
Sanna~\cite{MR4538571} proved that these families, together with some isolated pairs, are indeed all the pairs of Fibonacci numbers with such a property.
For integers $n \geq 7$ and $m \in \{3, n-3,n-2,n-1\}$, Prempreesuk, Noppakaew, and Pongsriiam~\cite{MR4057296} found necessary and sufficient conditions for $[F_m^{-1} \bmod F_n]$ to be a Fibonacci number.

Our first result is the following.

\begin{theorem}\label{thm:inverse-fibonacci}
	Let $\ell, m, n $ be integers with $\ell \geq 2$, $m \geq 3$, $n \geq 4$ and $m < 4n$.
	Then
	\begin{equation}\label{equ:Fl-equal-to-inverse-Fm-mod-Fn}
		F_\ell = [F_m^{-1} \bmod F_n]
	\end{equation}
	if and only if:
	\begin{enumerate}[label=(c{\small\arabic*})]
		\item\label{ite:inverse-fibonacci1} $\ell = n - \tfrac1{2}\big(3 + (-1)^n\big)$ and $m = n - 2$; or
		\item\label{ite:inverse-fibonacci2} $\ell = n - \tfrac1{2}\big(3 - (-1)^n\big)$ and $m \in \{n - 1, n + 1, n + 2\}$; or
		\item\label{ite:inverse-fibonacci3} $\ell = 2$, $m = 2n - 2$, and $n$ is odd; or
		\item\label{ite:inverse-fibonacci4} $\ell = 2$, $m \in \{2n - 1, 2n + 1, 2n + 2\}$, and $n$ is even; or
		\item\label{ite:inverse-fibonacci5} $\ell = n - 2$ and $m = 3n - 2$; or
		\item\label{ite:inverse-fibonacci6} $\ell = n - 1$ and $m \in \{3n - 1, 3n + 1, 3n + 2\}$; or
		\item\label{ite:inverse-fibonacci7} $\ell = 2$ and $m = 4n - 1$.
	\end{enumerate}
\end{theorem}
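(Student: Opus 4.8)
The plan is to rewrite \eqref{equ:Fl-equal-to-inverse-Fm-mod-Fn} as a congruence, to reduce $F_m$ modulo $F_n$ according to the band in which $m$ lies, and finally to solve a single core problem about when a product of two Fibonacci numbers is congruent to $\pm1$. To begin, $F_\ell=[F_m^{-1}\bmod F_n]$ holds exactly when $F_\ell F_m\equiv1\pmod{F_n}$ and $F_\ell\in\{1,\dots,F_n-1\}$; since $F_\ell\ge F_n$ for $\ell\ge n$, the latter forces $\ell\le n-1$. I would then record two facts. From $F_{n+1}\equiv F_{n-1}\pmod{F_n}$ and Cassini's identity $F_{n-1}F_{n+1}-F_n^2=(-1)^n$ one gets the crucial relation $F_{n-1}^2\equiv(-1)^n\pmod{F_n}$, so that $F_{n-1}$ is a unit with $F_{n-1}^{-1}\equiv(-1)^nF_{n-1}$. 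A short induction on $j$ using the Fibonacci recurrence yields the reflection identity $F_{n-1}F_j\equiv(-1)^{j+1}F_{n-j}\pmod{F_n}$; thus multiplying by the unit $F_{n-1}$ realises the index reflection $j\mapsto n-j$, up to a sign.

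Writing $m=qn+r$ with $0\le r<n$ and $q\in\{0,1,2,3\}$ (which is possible because $m<4n$), a short induction based on $F_{n+s}\equiv F_{n-1}F_s\pmod{F_n}$ gives $F_m\equiv F_{n-1}^qF_r\pmod{F_n}$. By the relation above, $F_{n-1}^q$ equals $1$, $F_{n-1}$, $(-1)^n$, $(-1)^nF_{n-1}$ for $q=0,1,2,3$, so \eqref{equ:Fl-equal-to-inverse-Fm-mod-Fn} becomes $F_\ell F_r\equiv F_{n-1}^{-q}\pmod{F_n}$, whose right-hand side is $1$, $(-1)^nF_{n-1}$, $(-1)^n$, $F_{n-1}$, respectively. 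When $q\in\{1,3\}$ the target is $\pm F_{n-1}$; here I would multiply the congruence by $F_{n-1}$ and use the reflection identity to rewrite it as $F_{n-\ell}F_r\equiv\pm1\pmod{F_n}$ with an explicit sign, while for $q\in\{0,2\}$ it already reads $F_\ell F_r\equiv\pm1$. In all cases the problem is thereby reduced to the core problem of finding all $1\le a\le b\le n-1$ with $F_aF_b\equiv\pm1\pmod{F_n}$.

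For the core problem the main tool is $F_{a+b-1}=F_aF_b+F_{a-1}F_{b-1}$, which gives $F_aF_b\le F_{a+b-1}$ in general and $F_{a+b-2}\le F_aF_b\le F_{a+b-1}$ when $a,b\ge2$. If $a+b\le n$ then $F_aF_b\le F_{n-1}<F_n-1$ (for $n\ge5$), so $F_aF_b\equiv\pm1$ forces $F_aF_b=1$ and hence $a,b\in\{1,2\}$. If $a+b>n$, put $c:=n-b<a$, so that $a+c\le n$, and use the reflection identity to turn $F_aF_b\equiv\pm1$ into $F_aF_c\equiv\pm F_{n-1}\pmod{F_n}$; since $F_aF_c\le F_{n-1}<F_n$, this means $F_aF_c\in\{F_{n-1},F_{n-2}\}$. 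The two-sided bound confines $a+c$ to $\{n-1,n\}$, and descending on the identity $F_{x+y-1}=F_xF_y+F_{x-1}F_{y-1}$ shows that $F_aF_c$ can hit $F_{n-1}$ or $F_{n-2}$ only when one of $a,c$ lies in $\{1,2\}$, which forces the other to be $n-2$ or $n-1$. Hence the complete list of solutions of the core problem consists of the three pairs with $a,b\in\{1,2\}$ (product $\equiv1$) and the three pairs $(a,b)\in\{(n-2,n-2),(n-2,n-1),(n-1,n-1)\}$.

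It remains to substitute the six pairs back for each $q\in\{0,1,2,3\}$, carefully tracking the signs coming from $F_{n-1}^{-q}$ and from the reflection identity. Each admissible combination of a pair, a value of $q$, and a matching sign produces exactly one of the families \ref{ite:inverse-fibonacci1}--\ref{ite:inverse-fibonacci7}, and conversely; in particular the sign constraints are what fix the parity conditions in \ref{ite:inverse-fibonacci1}--\ref{ite:inverse-fibonacci4}, determine which of $\ell\in\{2,n-2,n-1\}$ occurs, and rule out spurious candidates such as $m=4n-2$ (there $F_m\equiv-1$, whose inverse $F_n-1$ is not a Fibonacci number). The ``if'' direction is the very same chain of congruences read forwards, hence a direct verification. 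I expect the main difficulty to be exactly this sign-and-parity bookkeeping, together with the descent step ruling out a middle product $F_aF_c$ equal to $F_{n-2}$; the finitely many small $n$ for which the size estimates degenerate would be handled by direct computation.
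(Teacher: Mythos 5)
Your argument is correct, and it takes a genuinely different route from the paper's. The paper never works with inverses modulo $F_n$: it builds an explicit integer approximation $A_{\ell,m,n}$ to the quotient $F_\ell F_m/F_n$, uses the exact identity of Lemma~\ref{lem:identities}\ref{ite:FaFb-FcFd} to compute the error $B_{\ell,m,n}=F_\ell F_m-A_{\ell,m,n}F_n$ as $\pm F_aF_b$ with $|a|+|b|\le n$ in each of four bands (see~\eqref{equ:Blmn-cases}), pins down $A=A_{\ell,m,n}$ by a triangle-inequality estimate, and then solves the exact equation $B_{\ell,m,n}=1$ using $|F_k|=1$ if and only if $k\in\{-2,-1,1,2\}$ (negative indices allowed). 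You instead stay inside $\mathbb{Z}/F_n\mathbb{Z}$: Cassini gives the unit relation $F_{n-1}^2\equiv(-1)^n\pmod{F_n}$, the reflection identity $F_{n-1}F_j\equiv(-1)^{j+1}F_{n-j}\pmod{F_n}$ reduces each band $m=qn+r$ to the single core congruence $F_aF_b\equiv\pm1\pmod{F_n}$ with $1\le a\le b\le n-1$, and you classify its solutions via the two-sided bound $F_{a+b-2}\le F_aF_b\le F_{a+b-1}$ together with a descent on $F_{x+y-1}=F_xF_y+F_{x-1}F_{y-1}$. I verified both the classification (your six pairs are exactly the solutions) and the sign bookkeeping across $q=0,1,2,3$: it reproduces precisely \ref{ite:inverse-fibonacci1}--\ref{ite:inverse-fibonacci7}, correctly discarding, e.g., $m=4n-2$ and $(\ell,m)=(n-2,3n-1)$. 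As for what each approach buys: yours avoids the approximation step and the negative-index conventions, and your core lemma (all solutions of $F_aF_b\equiv\pm1\pmod{F_n}$) is a clean standalone statement of independent interest; the paper's approach yields the exact equation $B_{\ell,m,n}=1$, which makes the converse direction immediate and, more importantly, transfers verbatim to the Lucas setting of Theorem~\ref{thm:inverse-lucas}, where your unit trick degrades (one has $L_{n-1}^2\equiv 5(-1)^{n+1}\pmod{L_n}$, so the core congruence is no longer of the form $L_aL_b\equiv\pm1$). Two minor points to make explicit in a write-up: the residue $r=0$ must be excluded since $F_n\mid F_{qn}$, so $F_m$ is not invertible there; and the lower bound in your two-sided estimate follows for $a,b\ge2$ from $F_aF_b-F_{a+b-2}=F_{a-2}F_{b-2}$, with $\min\{a,b\}=1$ handled directly.
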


We remark that, in Theorem~\ref{thm:inverse-fibonacci}, the conditions on $\ell, m, n$ are not restrictive.
In fact, since $F_1 = F_2 = 1$, there is no loss in generality in assuming that $\ell, m, n \geq 2$.
Moreover, the cases in which $m = 2$ or $n \in \{2, 3\}$ are easy to study.
Hence, for the sake of brevity, we did not include them.
Finally, in light of Lemma~\ref{lem:period-4n}, the condition $m < 4n$ is not a restriction.

Let $(L_n)_{n \geq 1}$ be the sequence of Lucas numbers, which is defined by $L_1 := 2$, $L_2 := 1$, and $L_n = L_{n - 1} + L_{n - 2}$ for every integer $n \geq 3$.

Our second result is the following.

\begin{theorem}\label{thm:inverse-lucas}
	Let $\ell, m, n$ be integers with $\ell \geq 1$, $m \geq 2$, $n \geq 5$, and $m \leq 4n$.
	Then
	\begin{equation}\label{equ:Ll-equal-to-inverse-Lm-mod-Ln}
		L_\ell = [L_m^{-1} \bmod L_n]
	\end{equation}
	if and only if:
	\begin{enumerate}[label=(d{\small\arabic*})]
		\item\label{ite:inverse-lucas1} $\ell = \tfrac1{2}(n \pm 1)$, $m = \tfrac1{2}(n \mp 1)$, and $n \equiv 1 \pmod 4$; or
		\item\label{ite:inverse-lucas2} $\ell = \tfrac1{2}(n+1)$, $m = \tfrac1{2}(3n+1)$, and $n$ is odd; or
		\item\label{ite:inverse-lucas3} $\ell = 1$ and $m = 2n - (-1)^n$; or
		\item\label{ite:inverse-lucas4} $\ell = \tfrac1{2}(n - 1)$, $m = \tfrac1{2}(5n + 1)$, and $n \equiv 1 \pmod 4$; or
		\item\label{ite:inverse-lucas5} $\ell = \tfrac1{2}(n + 1)$, $m = \tfrac1{2}(5n - 1)$, and $n \equiv 1 \pmod 4$; or
		\item\label{ite:inverse-lucas6} $\ell = \tfrac1{2}(n + 1)$, $m = \tfrac1{2}(7n + 1)$, and $n$ is odd.
	\end{enumerate}
\end{theorem}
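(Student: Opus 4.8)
The plan is to work throughout with the Binet representation $L_k=\alpha^k+\beta^k$, where $\alpha=\tfrac12(1+\sqrt5)$ and $\beta=\tfrac12(1-\sqrt5)$, and to rest the whole argument on three elementary identities: the product formula $L_aL_b=L_{a+b}+(-1)^{b}L_{a-b}$, the reflection $L_{n+j}\equiv(-1)^{j+1}L_{n-j}\pmod{L_n}$ for $0\le j\le n$ (which follows at once from $L_nL_j=L_{n+j}+(-1)^{j}L_{n-j}$), and the resulting periodicity $L_{m+2n}\equiv(-1)^{n+1}L_m$ and $L_{m+4n}\equiv L_m\pmod{L_n}$. The last relation is the Lucas analogue of the periodicity lemma used for Theorem~\ref{thm:inverse-fibonacci} and is exactly what makes the hypothesis $m\le 4n$ harmless. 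The \emph{if} direction is then a direct verification: in each of \ref{ite:inverse-lucas1}--\ref{ite:inverse-lucas6} one computes $\ell+m$ and $m-\ell$, applies the product formula, and folds the two resulting indices. For instance, in \ref{ite:inverse-lucas2} one has $\ell+m=2n+1$ and $m-\ell=n$, whence $L_\ell L_m=L_{2n+1}+(-1)^{\ell}L_n\equiv L_{2n+1}\equiv(-1)^{n+1}L_1\equiv 1\pmod{L_n}$ because $n$ is odd. The other five cases fold in the same way, the two indices landing in $\{n,\,2n\pm1,\,3n,\,4n+1\}$, so that one term vanishes ($L_n\equiv L_{3n}\equiv0$) while the survivor becomes $\pm L_1\equiv\pm1$; in each case the stated congruence on $n$ modulo $4$ is precisely what forces the sign $(-1)^{\min(\ell,m)}$ (or the factor $(-1)^{n+1}$) to come out $+1$ rather than $-1$. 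One also checks $1\le\ell\le n-1$, so that $L_\ell$ is genuinely the least nonnegative representative.

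For the \emph{only if} direction I would first use the reflection and periodicity relations to write, for every $m$ with $2\le m\le 4n$, a reduced form $L_m\equiv\varepsilon L_r\pmod{L_n}$ with $\varepsilon\in\{+1,-1\}$ and $0\le r\le n$; the inverse exists precisely when $L_r\not\equiv0$, i.e.\ when $r\neq n$ (the coprimality bookkeeping, including the harmless factors of $2$ that Lucas numbers may share, is absorbed here). Rewriting \eqref{equ:Ll-equal-to-inverse-Lm-mod-Ln} as $L_\ell L_r\equiv\varepsilon\pmod{L_n}$ and using that the representative must lie in $\{0,\dots,L_n-1\}$, monotonicity of $(L_k)_{k\ge1}$ restricts $\ell$ to $1\le\ell\le n-1$. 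Expanding $L_\ell L_r$ by the product formula and folding the index $\ell+r\le 2n$ once more, every candidate collapses to a congruence of the shape
\begin{equation*}
\pm L_p\pm L_q\equiv\pm1\pmod{L_n},\qquad 0\le p,q\le n.
\end{equation*}

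The heart of the matter, and the step I expect to be the main obstacle, is to show that this last congruence has \emph{no} solutions beyond those coming from \ref{ite:inverse-lucas1}--\ref{ite:inverse-lucas6}. Since $L_k<L_n$ for $k\le n-1$ and $L_n\equiv0$, the integer $\pm L_p\pm L_q$ has absolute value below $2L_n$, so the congruence is equivalent to one of the finitely many \emph{exact} equations $\pm L_p\pm L_q=c$ with $c\in\{\pm1,\,\pm(L_n-1),\,\pm(L_n+1)\}$. Each of these is then resolved from the near-geometric growth of the sequence: the recurrence gives $L_{k+1}-L_k=L_{k-1}$ and $L_{k+1}+L_k=L_{k+2}$, which pin down the very few pairs $(p,q)$ realizing a prescribed small value or a value within $1$ of $L_n$, while any representation forcing $p$ or $q$ away from the endpoints $0$ and $n$ is ruled out on size grounds. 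Tracing these solutions back through the two foldings recovers exactly the six families, the parity of $\min(\ell,m)$ together with the factor $(-1)^{n+1}$ accounting for the conditions on $n$ modulo $4$. What remains is organizational rather than conceptual: the sign-and-parity bookkeeping must be carried out over all four folding regions $[0,n],[n,2n],[2n,3n],[3n,4n]$, and one must separately dispose of the boundary indices $m\in\{2,4n\}$ and confirm that for $n\ge5$ no small-modulus coincidence produces a spurious solution. This mirrors, \emph{mutatis mutandis}, the scheme used to prove Theorem~\ref{thm:inverse-fibonacci}, the only genuinely new ingredient being the Lucas product and reflection identities in place of their Fibonacci counterparts.
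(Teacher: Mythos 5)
Your proposal is correct and follows essentially the same route as the paper: the reflection and periodicity congruences you fold with generate exactly the terms of the paper's explicit approximant $C_{\ell,m,n}$, your size bounds are the inequalities of Lemma~\ref{lem:inequalities}, and your boundary values $c=\pm(L_n-1),\pm(L_n+1)$ correspond precisely to the paper's shifted approximants $C' = C_{\ell,m,n}+s$ with $s\in\{-1,0,1\}$ used at the fold endpoints $m\in\{n\pm\ell,\,3n\pm\ell\}$ and at $(\ell,m)=(n-1,2n),(n-1,4n)$. The case analysis you defer as organizational (solving the exact equations $\pm L_p\pm L_q=c$ and the sign-and-parity bookkeeping across the four regions) is exactly the mechanical work the paper carries out interval by interval, so nothing conceptual is missing from your outline.
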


We remark that, in Theorem~\ref{thm:inverse-lucas}, the conditions on $\ell, m, n$ are not restrictive.
In fact, the cases in which $m = 1$ or $n \in \{2, 3, 4\}$ are easy to study.
Hence, for the sake of brevity, we did not include them.
Furthermore, in light of Lemma~\ref{lem:period-4n}, the condition $m \leq 4n$ is not a restriction.

The proof of Theorem~\ref{thm:inverse-fibonacci} (resp.~\ref{thm:inverse-lucas}) is based on approximating $F_\ell F_m / F_n$ (resp.~$L_\ell L_m / L_n$) with an appropriate linear combination of Fibonacci (resp.~Lucas) numbers, which depends on the size of $m$ relatively to $\ell$ and $n$.
This approximation is sufficiently accurate to imply certain identities for the integers $\ell,m,n$ that satisfy~\eqref{equ:Fl-equal-to-inverse-Fm-mod-Fn} (resp.~\eqref{equ:Ll-equal-to-inverse-Lm-mod-Ln}).
Then, these identities make possible to determine $\ell, m,n$.

\section{Preliminaries}

It is well known that the Binet formulas
\begin{equation}\label{equ:binet}
	F_n = \frac{\alpha^n - \beta^n}{\alpha - \beta} \quad\text{ and }\quad L_n = \alpha^n + \beta^n
\end{equation}
hold for every integer $n \geq 1$, where $\alpha := (1 + \sqrt{5}) / 2$ and $\beta := (1 - \sqrt{5}) / 2$ (see, e.g., \cite[Ch.~1]{MR1761897}).

In fact, it is useful to extend the sequences of Fibonacci and Lucas numbers to all integers by using~\eqref{equ:binet}.
In particular, from \eqref{equ:binet} it follows that
\begin{equation}\label{equ:negative}
	F_{-n} = (-1)^{n + 1} F_n \quad\text{ and }\quad L_{-n} = (-1)^n L_n ,
\end{equation}
for every integer $n$.

We also need the following lemmas.

\begin{lemma}\label{lem:products}
	We have that
	\begin{enumerate}
		\item\label{ite:product-La-Lb} $L_a L_b = L_{a + b} + (-1)^b L_{a - b}$;
		\item\label{ite:product-5Fa-Fb} $5F_a F_b = L_{a + b} - (-1)^b L_{a - b}$;
		\item\label{ite:product-Fa-Fb-long} $F_a F_b = F_{a + b} - F_{a + b - 2} - F_{a - 1} F_{b - 1}$;
	\end{enumerate}
	for all integers $a, b$.
\end{lemma}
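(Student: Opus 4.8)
The plan is to deduce all three identities from the Binet formulas \eqref{equ:binet}, which, by our extension of $F_n$ and $L_n$ to all integers, are valid for every integer exponent, so that no separate treatment of the signs or parities of $a,b$ is needed. I would first record the two facts about $\alpha,\beta$ that drive everything: since $\alpha,\beta$ are the roots of $x^2 - x - 1$, we have $\alpha\beta = -1$ and $(\alpha-\beta)^2 = (\alpha+\beta)^2 - 4\alpha\beta = 5$. The single mechanism behind all the cross terms is that, for all integers $a,b$, $\alpha^a\beta^b = (\alpha\beta)^b\,\alpha^{a-b} = (-1)^b\alpha^{a-b}$, and likewise $\alpha^b\beta^a = (-1)^b\beta^{a-b}$.

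For \ref{ite:product-La-Lb} I would expand $L_aL_b = (\alpha^a+\beta^a)(\alpha^b+\beta^b) = (\alpha^{a+b}+\beta^{a+b}) + (\alpha^a\beta^b+\alpha^b\beta^a)$; the first bracket is $L_{a+b}$, and by the observation above the second is $(-1)^b(\alpha^{a-b}+\beta^{a-b}) = (-1)^bL_{a-b}$, giving exactly \ref{ite:product-La-Lb}. For \ref{ite:product-5Fa-Fb} I would multiply out $5F_aF_b = (\alpha-\beta)^2 F_aF_b = (\alpha^a-\beta^a)(\alpha^b-\beta^b)$, which differs from the previous expansion only in the sign of the cross terms, hence equals $(\alpha^{a+b}+\beta^{a+b}) - (\alpha^a\beta^b+\alpha^b\beta^a) = L_{a+b} - (-1)^bL_{a-b}$. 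Thus \ref{ite:product-La-Lb} and \ref{ite:product-5Fa-Fb} are immediate and essentially identical one-line computations.

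The only part needing slightly more is \ref{ite:product-Fa-Fb-long}. Here I would first note that $F_{a+b} - F_{a+b-2} = F_{a+b-1}$, so \ref{ite:product-Fa-Fb-long} is equivalent to the symmetric identity $F_aF_b + F_{a-1}F_{b-1} = F_{a+b-1}$. I would prove this by applying \ref{ite:product-5Fa-Fb} twice: $5F_aF_b = L_{a+b} - (-1)^bL_{a-b}$ and $5F_{a-1}F_{b-1} = L_{a+b-2} - (-1)^{b-1}L_{a-b} = L_{a+b-2} + (-1)^bL_{a-b}$. The key cancellation is that the two $(-1)^bL_{a-b}$ terms have opposite signs, since $(-1)^{b-1} = -(-1)^b$, so summing gives $5(F_aF_b + F_{a-1}F_{b-1}) = L_{a+b} + L_{a+b-2}$. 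Finally I would invoke the auxiliary relation $L_{n+1} + L_{n-1} = 5F_n$, itself a one-line Binet computation since $5F_n = (\alpha-\beta)(\alpha^n-\beta^n) = \alpha^{n+1}+\beta^{n+1}+\alpha^{n-1}+\beta^{n-1}$ after using $\alpha\beta=-1$, with $n = a+b-1$; this turns the right-hand side into $5F_{a+b-1}$ and yields \ref{ite:product-Fa-Fb-long} upon dividing by $5$.

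The main obstacle is bookkeeping rather than any genuine difficulty: keeping the $(-1)^b$ factors consistent across the three identities (especially the sign flip $(-1)^{b-1} = -(-1)^b$ that produces the cancellation in \ref{ite:product-Fa-Fb-long}), and making sure the Binet formulas are genuinely available for all integer, not merely positive, indices, so that the whole argument is uniform in the signs and parities of $a$ and $b$.
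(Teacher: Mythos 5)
Your proposal is correct and takes exactly the paper's approach: the paper's proof consists of the single line that the identities ``follow easily from \eqref{equ:binet}'', and your argument simply carries out that Binet-formula verification in full (using $\alpha\beta=-1$, $(\alpha-\beta)^2=5$, and, for \ref{ite:product-Fa-Fb-long}, two applications of \ref{ite:product-5Fa-Fb} together with $L_{n+1}+L_{n-1}=5F_n$), with all sign bookkeeping checking out.
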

\begin{proof}
	These identities follow easily from \eqref{equ:binet}.
\end{proof}

\begin{lemma}\label{lem:identities}
	We have that
	\begin{enumerate}
		\item\label{ite:FaFb-FcFd} $F_a F_b - F_c F_d = (-1)^{a+1} F_{c-a} F_{d - a}$;
		\item\label{ite:LaLb-LcLd} $L_a L_b - L_c L_d = (-1)^a 5 F_{c-a} F_{d - a}$;
		\item\label{ite:5FaFb-LcLd} $5F_a F_b - L_c L_d = (-1)^{a+1} L_{c - a} L_{d - a}$;
	\end{enumerate}
	for all integers $a,b,c,d$ with $a + b = c + d$.
\end{lemma}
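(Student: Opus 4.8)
The plan is to derive all three identities from the product-to-sum formulas already established in Lemma~\ref{lem:products}, using the hypothesis $a+b=c+d$ to cancel the contributions indexed by the common sum. Concretely, parts~\ref{ite:product-La-Lb} and~\ref{ite:product-5Fa-Fb} of that lemma give, for all integers $x,y$, the two ``master'' forms
\begin{equation*}
	L_x L_y = L_{x+y} + (-1)^y L_{x-y}, \qquad 5 F_x F_y = L_{x+y} - (-1)^y L_{x-y}.
\end{equation*}
The point is that each of the four products occurring on the left-hand sides of Lemma~\ref{lem:identities}, as well as the single product on each right-hand side, can be rewritten through these formulas as a Lucas number indexed by the sum of the two subscripts plus a signed Lucas number indexed by their difference. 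I would therefore set $s := a+b = c+d$ and expand everything into such pieces.

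First I would treat part~\ref{ite:FaFb-FcFd}. Multiplying through by $5$ and applying the second master form to $F_aF_b$ and to $F_cF_d$, the two $L_s$ terms cancel (this is where $a+b=c+d$ enters), leaving a signed combination of $L_{b-a}$ and $L_{d-c}$. Applying the same master form in reverse to the right-hand product $F_{c-a}F_{d-a}$—whose subscripts sum to $s-2a$ and whose relevant difference is $c-d$—produces exactly this combination, up to the global factor $(-1)^{a+1}$, after which the $5$'s cancel. Parts~\ref{ite:LaLb-LcLd} and~\ref{ite:5FaFb-LcLd} proceed in the same fashion: in~\ref{ite:LaLb-LcLd} one applies the first master form to all products, and in~\ref{ite:5FaFb-LcLd} one applies the second form to $5F_aF_b$ and the first form to $L_cL_d$ and to the right-hand product $L_{c-a}L_{d-a}$. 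In every case the sum-indexed Lucas term drops out and the surviving difference-indexed terms reorganize into the claimed right-hand side.

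The only delicate point, and the step I would be most careful about, is the bookkeeping of the sign exponents. After cancelling the $L_s$ terms one compares expressions built from $(-1)^a L_{s-2a}$ and $(-1)^c L_{s-2c}$ against the expansion of the right-hand side, and to see that the two match one repeatedly uses \eqref{equ:negative} to flip negatively-indexed Lucas numbers, together with $a+b=c+d$ both to identify the common index $L_s$ and to recognize $L_{c+d-2a}$ as $L_{b-a}=L_{s-2a}$. Since all of this rests ultimately on $\alpha\beta=-1$ and $\alpha+\beta=1$, an equally valid and perhaps more transparent alternative is to substitute the Binet formulas \eqref{equ:binet} directly into both sides and match the four resulting exponential monomials; I would adopt whichever route keeps the parity tracking cleanest, expecting no genuine obstacle beyond this routine sign accounting.
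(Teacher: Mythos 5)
Your proposal is correct and takes essentially the same approach as the paper, whose proof is exactly the two routes you describe: forming differences of the product-to-sum identities of Lemma~\ref{lem:products} and then using~\eqref{equ:negative} for the sign bookkeeping, with direct substitution of the Binet formulas~\eqref{equ:binet} as the alternative. The sign accounting you flag as the delicate point does go through in all three cases (the key steps being $L_{c+d-2a}=L_{b-a}$ and $(-1)^b L_{a-b}=(-1)^a L_{b-a}$), so there is no gap.
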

\begin{proof}
	These identities can be proved either directly from~\eqref{equ:binet}, or by taking appropriate differences of pairs of the identities of Lemma~\ref{lem:products} and, eventually, employing~\eqref{equ:negative}.
\end{proof}

\begin{lemma}\label{lem:period-4n}
	We have that
	\begin{enumerate}
		\item $F_{a + 4n} \equiv F_a \pmod {F_n}$;
		\item $L_{a + 4n} \equiv L_a \pmod {L_n}$;
	\end{enumerate}
	for all integers $a$ and $n \geq 1$.
\end{lemma}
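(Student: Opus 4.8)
The plan is to deduce the lemma from a sharper \emph{half-period} congruence, in which shifting the index by $2n$ only multiplies the term by a sign, and then to square that sign away by shifting twice. Concretely, I aim to prove, for all integers $a$ and all $n \geq 1$,
\[
	F_{a + 2n} \equiv (-1)^n F_a \pmod{F_n} \qquad\text{and}\qquad L_{a + 2n} \equiv (-1)^{n + 1} L_a \pmod{L_n}.
\]
Granting these, the lemma is immediate: replacing $a$ by $a + 2n$ and iterating gives $F_{a + 4n} \equiv (-1)^n F_{a + 2n} \equiv (-1)^{2n} F_a = F_a \pmod{F_n}$, and likewise $L_{a + 4n} \equiv (-1)^{n + 1} L_{a + 2n} \equiv (-1)^{2n + 2} L_a = L_a \pmod{L_n}$.

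The Lucas half-period congruence is essentially one line. Taking $b = n$ in the first identity of Lemma~\ref{lem:products} gives $L_a L_n = L_{a + n} + (-1)^n L_{a - n}$, and since $L_n \mid L_a L_n$ we obtain $L_{a + n} \equiv (-1)^{n + 1} L_{a - n} \pmod{L_n}$. Replacing $a$ by $a + n$ yields $L_{a + 2n} \equiv (-1)^{n + 1} L_a \pmod{L_n}$, as desired.

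For the Fibonacci half-period I would first record the addition formula $F_{x + y - 1} = F_x F_y + F_{x - 1} F_{y - 1}$, which follows from the third identity of Lemma~\ref{lem:products} upon using $F_{a + b} - F_{a + b - 2} = F_{a + b - 1}$. Choosing $x = 2n$ and $y = a + 1$ gives $F_{a + 2n} = F_{2n} F_{a + 1} + F_{2n - 1} F_a$, so it suffices to reduce $F_{2n}$ and $F_{2n - 1}$ modulo $F_n$. The choice $x = n$, $y = n + 1$ gives $F_{2n} = F_n(F_{n + 1} + F_{n - 1}) \equiv 0 \pmod{F_n}$, while $x = y = n$ gives $F_{2n - 1} = F_n^2 + F_{n - 1}^2 \equiv F_{n - 1}^2 \pmod{F_n}$. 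Finally, the Cassini-type relation $F_{n - 1} F_{n + 1} - F_n^2 = (-1)^n$ is the special case $a = n - 1$, $b = n + 1$, $c = d = n$ of the first identity of Lemma~\ref{lem:identities}; combined with $F_{n + 1} \equiv F_{n - 1} \pmod{F_n}$ it gives $F_{n - 1}^2 \equiv (-1)^n \pmod{F_n}$. Hence $F_{2n - 1} \equiv (-1)^n$ and therefore $F_{a + 2n} \equiv (-1)^n F_a \pmod{F_n}$.

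There is no serious obstacle here; the only thing demanding care is the bookkeeping of the two signs. The structural point is that a shift by the half-period $2n$ introduces the factor $(-1)^n$ for Fibonacci and $(-1)^{n + 1}$ for Lucas, and it is exactly the squaring of this factor over the full shift $4n$ that collapses it to $+1$ and yields genuine periodicity modulo $F_n$ (respectively $L_n$). To guard against a parity slip I would verify both half-period relations on a small case where the sign is nontrivial, for instance $n = 5$, where the claim $F_{a + 10} \equiv -F_a \pmod 5$ is readily checked against $F_{11} = 89 \equiv 4$ and $-F_1 \equiv 4 \pmod 5$.
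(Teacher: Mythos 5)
Your proof is correct, but it takes a genuinely different route from the paper. The paper disposes of the lemma in one step by exhibiting the exact factorizations
\begin{equation*}
	F_{a + 4n} - F_a = F_n L_n L_{a + 2n} \quad\text{ and }\quad L_{a + 4n} - L_a = 5 F_n F_{a + 2n} L_n ,
\end{equation*}
verified directly from the Binet formulas~\eqref{equ:binet}; the divisibility by $F_n$ (resp.~$L_n$) is then visible on the right-hand side. You instead establish the half-period congruences $F_{a + 2n} \equiv (-1)^n F_a \pmod{F_n}$ and $L_{a + 2n} \equiv (-1)^{n+1} L_a \pmod{L_n}$ and iterate once to kill the sign. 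All of your intermediate steps check out: the Lucas half-period follows from Lemma~\ref{lem:products}\ref{ite:product-La-Lb} with $b = n$; the addition formula $F_{x+y-1} = F_x F_y + F_{x-1}F_{y-1}$ does follow from Lemma~\ref{lem:products}\ref{ite:product-Fa-Fb-long} together with the recurrence (both valid for all integer indices, so your argument covers all $a \in \mathbb{Z}$ as required); and the reductions $F_{2n} \equiv 0$, $F_{2n-1} \equiv F_{n-1}^2 \equiv (-1)^n \pmod{F_n}$ via the Cassini case of Lemma~\ref{lem:identities}\ref{ite:FaFb-FcFd} are all sound. The trade-off: the paper's identity is shorter and stronger in that it exhibits the quotient explicitly, but it demands a Binet computation stated without detail; your argument is longer yet entirely self-contained given Lemmas~\ref{lem:products} and~\ref{lem:identities}, and it yields a sharper structural fact along the way --- quasi-periodicity with period $2n$ up to the sign $(-1)^n$ (resp.~$(-1)^{n+1}$) --- which explains \emph{why} the period $4n$ works, rather than merely verifying that it does.
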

\begin{proof}
	Using~\eqref{equ:binet} one can verify that
	\begin{equation*}
		F_{a + 4n} - F_a = F_n L_n L_{a + 2n} \quad \text{ and } \quad L_{a + 4n} - L_a = 5 F_n F_{a + 2n} L_n ,
	\end{equation*}
	from which the claim follows.
\end{proof}

\begin{lemma}\label{lem:inequalities}
	Let $a, b, n$ be integers.
	We have that:
	\begin{enumerate}
		\item\label{ite:inequality-Fa-Fb} $|F_a F_b| < F_n - 1$ if $|a| + |b| \leq n$ and $n \geq 5$;
		\item\label{ite:inequality-5Fa-Fb} $|5F_a F_b| < L_n - 1$ if $|a| + |b| \leq n - 1$ and $n \geq 6$;
		\item\label{ite:inequality-La} $|L_a| < L_n - 1$ if $|a| \leq n - 1$ and $n \geq 4$;
		\item\label{ite:inequality-La-Lb} $|L_a L_b| < L_n - 1$ if $|a| + |b| \leq n - 1$, $\{|a|,|b|\} \notin \{0, n - 1\}$, and $n \geq 6$.
	\end{enumerate}
\end{lemma}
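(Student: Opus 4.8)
The plan is to reduce everything to nonnegative indices and then exploit the product identities of Lemma~\ref{lem:products} together with a few telescoping Fibonacci/Lucas relations. First, by~\eqref{equ:negative} we have $|F_a| = F_{|a|}$ and $|L_a| = L_{|a|}$ for every integer $a$, so in each of the four parts there is no loss of generality in assuming $a, b \geq 0$. Moreover, the subcases in which one of $a, b$ vanishes collapse the relevant product to $0$ (in parts~\ref{ite:inequality-Fa-Fb} and~\ref{ite:inequality-5Fa-Fb}) or to a single Lucas number (in part~\ref{ite:inequality-La-Lb}), and I would dispose of them immediately.

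The engine for parts~\ref{ite:inequality-Fa-Fb} and~\ref{ite:inequality-5Fa-Fb} is the elementary bound $F_a F_b \leq F_{a+b-1}$, valid for all $a, b \geq 1$: indeed Lemma~\ref{lem:products}\ref{ite:product-Fa-Fb-long} rewrites $F_a F_b = F_{a+b-1} - F_{a-1}F_{b-1}$, and $F_{a-1}F_{b-1} \geq 0$. For part~\ref{ite:inequality-Fa-Fb}, monotonicity then gives $F_a F_b \leq F_{a+b-1} \leq F_{n-1}$ whenever $a + b \leq n$, and since $F_n - F_{n-1} = F_{n-2} \geq F_3 = 2$ for $n \geq 5$ we conclude $F_a F_b \leq F_{n-1} \leq F_n - 2 < F_n - 1$. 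For part~\ref{ite:inequality-5Fa-Fb}, the same bound yields $5F_a F_b \leq 5F_{n-2}$ whenever $a + b \leq n - 1$; here the key identity is $L_n - 5F_{n-2} = L_{n-4}$ (immediate from~\eqref{equ:binet}, or from $5F_{n-2} = L_{n-1} + L_{n-3}$ and $L_n = L_{n-1} + L_{n-2}$), so that $5F_a F_b \leq L_n - L_{n-4} \leq L_n - 3 < L_n - 1$ because $L_{n-4} \geq L_2 = 3$ for $n \geq 6$.

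For the Lucas estimates I would argue analogously. Part~\ref{ite:inequality-La} is immediate: $|L_a| = L_{|a|} \leq L_{n-1}$ (the maximum over $0 \leq |a| \leq n-1$, since $L_{n-1} \geq L_3 > L_0$ for $n \geq 4$), and $L_n - L_{n-1} = L_{n-2} \geq L_2 = 3 > 1$. For part~\ref{ite:inequality-La-Lb}, assume $a, b \geq 0$ and, by symmetry, $a \leq b$; Lemma~\ref{lem:products}\ref{ite:product-La-Lb} combined with~\eqref{equ:negative} gives $L_a L_b \leq L_{a+b} + L_{b-a}$. When $a \geq 2$ we have $a + b \leq n - 1$ and $b - a \leq n - 3$, whence $L_a L_b \leq L_{n-1} + L_{n-3} = L_n - L_{n-4} < L_n - 1$, using $L_{n-1} + L_{n-3} = L_n - L_{n-4}$ and $L_{n-4} \geq 3$. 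The small cases $a \in \{0,1\}$ are handled directly: $a = 1$ gives $L_a L_b = L_b \leq L_{n-2} < L_n - 1$, while $a = 0$ gives $L_a L_b = 2L_b$ with $b \leq n - 2$ forced by the hypothesis $\{|a|,|b|\} \neq \{0, n-1\}$, so that $2L_b \leq 2L_{n-2} = L_n - L_{n-3} < L_n - 1$.

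The routine part is the collection of telescoping identities ($F_n - F_{n-1} = F_{n-2}$, $L_n - 5F_{n-2} = L_{n-4}$, $2L_{n-2} = L_n - L_{n-3}$, and $L_{n-1} + L_{n-3} = L_n - L_{n-4}$), each a one-line consequence of~\eqref{equ:binet}. The only genuine subtlety, and the place that dictates the hypotheses, is the bookkeeping at the boundary indices $0$ and $1$: because $L_0 = 2 > 1 = L_1$, naive monotonicity of the Lucas sequence fails at the left end, and in part~\ref{ite:inequality-La-Lb} the product $L_0 L_{n-1} = 2L_{n-1} > L_n$ actually violates the desired bound, which is precisely why the configuration $\{|a|,|b|\} = \{0, n-1\}$ is excluded. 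Keeping track of these degenerate cases, rather than any delicate estimate, is the \emph{main obstacle} to get right.
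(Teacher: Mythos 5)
Your proof is correct and takes essentially the same approach as the paper: reduce to nonnegative indices via~\eqref{equ:negative}, bound the products through the identities of Lemma~\ref{lem:products}, telescope against $L_n$ (e.g.\ $L_{n-1}+L_{n-3}=L_n-L_{n-4}$), and isolate the boundary configuration $\{|a|,|b|\}=\{0,n-1\}$ in part~\ref{ite:inequality-La-Lb}. The only differences are organizational --- your uniform bound $F_aF_b\le F_{a+b-1}$ avoids the paper's small-case split $a+b\le 4$ in parts~\ref{ite:inequality-Fa-Fb}--\ref{ite:inequality-5Fa-Fb}, and in part~\ref{ite:inequality-La-Lb} you split on $\min\{|a|,|b|\}$ rather than on $|a|+|b|$ --- and both versions check out.
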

\begin{proof}
	By~\eqref{equ:negative}, we have that $|F_k| = F_{|k|}$ and $|L_k| = L_{|k|}$, for every integer $k$.
	Hence, throughout the proof, we can assume that $a, b \geq 0$.
    Moreover, by symmetry, we can assume that $a \geq b$.

	Let us prove~\ref{ite:inequality-Fa-Fb} and~\ref{ite:inequality-5Fa-Fb}.
	If $b = 0$ or $a + b \leq 4$, then \ref{ite:inequality-Fa-Fb} and \ref{ite:inequality-5Fa-Fb} follow easily.
	Hence, assume that $b \geq 1$ and $a + b \geq 5$.
	Then, by Lemma~\ref{lem:products}\ref{ite:product-Fa-Fb-long}, we have that
	\begin{equation*}
		F_a F_b = F_{a + b} - F_{a + b - 2} - F_{a - 1} F_{b - 1} \leq F_n - F_3 < F_n - 1 ,
	\end{equation*}
	whenever $a + b \leq n$, which proves~\ref{ite:inequality-Fa-Fb}.
	Furthermore, by Lemma~\ref{lem:products}\ref{ite:product-5Fa-Fb}, we have that
	\begin{equation*}
		5 F_a F_b = L_{a + b} - (-1)^b L_{a - b} \leq L_{a + b} + L_{a - b} \leq L_{n - 1} + L_{n - 3} = L_n - L_{n - 4} < L_n - 1 ,
	\end{equation*}
	whenever $a + b \leq n - 1$ and $n \geq 6$, which proves~\ref{ite:inequality-5Fa-Fb}.

	If $a \leq n - 1$ and $n \geq 4$, then
    \begin{equation*}
        L_a \leq \max\{2,L_{n - 1}\} = L_{n - 1} = L_n - L_{n - 2} < L_n - 1 ,
    \end{equation*}
    which proves~\ref{ite:inequality-La}.

    By Lemma~\ref{lem:products}\ref{ite:product-La-Lb}, we have that
    \begin{equation*}
        L_{a + b} = L_{a + b} + (-1)^b L_{a - b} \leq L_{a + b} + L_{a - b} .
    \end{equation*}
    Therefore, if $a + b \leq n - 2$ and $n \geq 5$, then
    \begin{equation*}
        L_{a + b} \leq 2 L_{n - 2} = L_n - L_{n - 3} < L_n - 1 .
    \end{equation*}
    Furthermore, if $a + b = n - 1$, $b \geq 1$, and $n \geq 6$, then
    \begin{equation*}
        L_{a + b} \leq L_{n - 1} + L_{n - 3} = L_n - L_{n - 4} < L_n - 1 .
    \end{equation*}
    Thus~\ref{ite:inequality-La-Lb} is proved.
\end{proof}

\section{Proof of Theorem~\ref{thm:inverse-fibonacci}}

With a bit of patience, one can check that Theorem~\ref{thm:inverse-fibonacci} holds for $n = 4$.
Hence, hereafter, assume that $n \geq 5$.

Suppose that~\eqref{equ:Fl-equal-to-inverse-Fm-mod-Fn} is satisfied.
Hence, we have that $F_\ell < F_n$ and
\begin{equation}\label{equ:Fl-Fm-A-Fn-1}
	F_\ell F_m - A F_n = 1 ,
\end{equation}
for some integer $A \geq 0$.
In particular, from $F_\ell < F_n$ it follows that $\ell < n$.
Moreover, since $m \geq 3$, we get that $A \geq 1$.
Consequently, we have that $F_\ell F_m \geq F_n + 1$.
Hence, by Lemma~\ref{lem:inequalities}\ref{ite:inequality-Fa-Fb}, we get that $m > n - \ell$.

Define the four disjoint intervals
\begin{equation*}
	I_1 := {(n - \ell, n + \ell]}, \quad I_2 := {(n + \ell, 3n - \ell]}, \quad I_3 := {(3n - \ell, 3n + \ell]}, \quad I_4 := {(3n + \ell, 4n)} .
\end{equation*}
By the previous considerations, we have that $m$ belongs to exactly one of such intervals.

For every integer $k$, put $F_k^+ := F_k$ if $k \geq 0$, and $F_k^+ := 0$ if $k < 0$.
Then, define
\begin{equation*}
	A_{\ell, m, n} := F_{\ell + m - n} - (-1)^\ell F_{-\ell + m - n}^+ + (-1)^n F_{\ell + m - 3n}^+ - (-1)^{\ell + n} F_{-\ell + m - 3n}^+
\end{equation*}
and
\begin{equation}\label{equ:def-Blmn}
	B_{\ell, m, n} := F_\ell F_m - A_{\ell, m, n} F_n .
\end{equation}
Let us prove that
\begin{equation}\label{equ:Blmn-cases}
	B_{\ell, m, n} =
	\begin{cases}
		(-1)^{\ell + 1} F_{m - n} F_{n - \ell} & \text{ if } m \in I_1 ; \\
		(-1)^n F_{\ell} F_{m - 2n} & \text{ if } m \in I_2 ; \\
		(-1)^{\ell+n+1} F_{m - 3n} F_{n - \ell} & \text{ if } m \in I_3 ; \\
		F_{\ell} F_{m - 4n} & \text{ if } m \in I_4 .
	\end{cases}
\end{equation}

By Lemma~\ref{lem:identities}\ref{ite:FaFb-FcFd}, we have that
\begin{equation}\label{equ:Blmn-I1}
	F_\ell F_m - F_{\ell + m - n} F_n = (-1)^{\ell + 1} F_{m - n} F_{n - \ell} .
\end{equation}
Hence, we get that $B_{\ell, m, n} = (-1)^{\ell + 1} F_{m - n} F_{n - \ell}$ for each $m \in I_1$.

By~\eqref{equ:Blmn-I1}, Lemma~\ref{lem:identities}\ref{ite:FaFb-FcFd}, and~\eqref{equ:negative}, we have that
\begin{align}\label{equ:Blmn-I2}
	F_\ell &F_m - \big(F_{\ell + m - n} - (-1)^\ell F_{-\ell + m - n}\big) F_n =
	(-1)^{\ell + 1} (F_{m - n} F_{n - \ell} - F_{-\ell + m - n} F_n) \\
	&= (-1)^{\ell + m + n} F_{-\ell} F_{2n - m} = (-1)^n F_{\ell} F_{m - 2n} \nonumber
\end{align}
Hence, we get that $B_{\ell, m, n} = (-1)^n F_{\ell} F_{m - 2n}$ for each $m \in I_2$.

By~\eqref{equ:Blmn-I2} and Lemma~\ref{lem:identities}\ref{ite:FaFb-FcFd}, we have that
\begin{align}\label{equ:Blmn-I3}
	F_\ell &F_m - \big(F_{\ell + m - n} - (-1)^\ell F_{-\ell + m - n} + (-1)^n F_{\ell + m - 3n}\big) F_n \\
	&= (-1)^n (F_{\ell} F_{m - 2n} - F_{\ell + m - 3n} F_n)
	= (-1)^{\ell+n+1} F_{m - 3n} F_{n - \ell} . \nonumber
\end{align}
Hence, we get that $B_{\ell, m, n} = (-1)^{\ell+n+1} F_{m - 3n} F_{n - \ell}$ for each $m \in I_3$.

Finally, by~\eqref{equ:Blmn-I3},~\eqref{equ:negative}, and Lemma~\ref{lem:identities}\ref{ite:FaFb-FcFd}, we have that
\begin{align*}
	F_\ell &F_m - \big(F_{\ell + m - n} - (-1)^\ell F_{-\ell + m - n} + (-1)^n F_{\ell + m - 3n} - (-1)^{\ell + n} F_{-\ell + m - 3n}\big) F_n \\
	&= (-1)^{\ell+n+1} (F_{m - 3n} F_{n - \ell} - F_{-\ell + m - 3n} F_n)
	= (-1)^{\ell+m} F_{-\ell} F_{4n - m} = F_{\ell} F_{m - 4n} . \nonumber
\end{align*}
Hence, we get that $B_{\ell, m, n} = F_{\ell} F_{m - 4n}$ for each $m \in I_4$.
The proof of~\eqref{equ:Blmn-cases} is complete.

At this point, considering the four cases in \eqref{equ:Blmn-cases}, one can easily check that $B_{\ell, m, n}$ is equal to~$\pm F_a F_b$, where $a$ and $b$ are integers (depending on $\ell, m, n$) such that $|a| + |b| \leq n$.

Therefore, from~\eqref{equ:def-Blmn} and Lemma~\ref{lem:inequalities}\ref{ite:inequality-Fa-Fb}, we get that
\begin{equation*}
	\left|\frac{F_\ell F_m}{F_n} - A_{\ell, m, n}\right| = \frac{|B_{\ell,m,n}|}{F_n} = \frac{|F_a F_b|}{F_n} < 1 - \frac1{F_n} .
\end{equation*}
Consequently, recalling \eqref{equ:Fl-Fm-A-Fn-1}, we have that
\begin{equation*}
	|A - A_{\ell, m, n}| \leq \left|A - \frac{F_\ell F_m}{F_n}\right| + \left|\frac{F_\ell F_m}{F_n} - A_{\ell, m, n}\right| < \frac1{F_n} + \left(1 - \frac1{F_n}\right) = 1 ,
\end{equation*}
which implies that $A = A_{\ell, m, n}$, since $A$ and $A_{\ell, m, n}$ are both integers.
Then, from \eqref{equ:Fl-Fm-A-Fn-1} and \eqref{equ:def-Blmn}, we get that $B_{\ell, m, n} = 1$.

Note that, for every integer $k$, we have that $|F_k| = 1$ if and only if $k \in \{-2, -1, 1, 2\}$.
In~particular, we have that $F_{-2} = -1$ and $F_{-1} = F_1 = F_2 = 1$.
Therefore, from $B_{\ell, m, n} = 1$ we can determine $\ell$ and $m$ in terms of $n$ in each of the four cases in~\eqref{equ:Blmn-cases}.

If $m \in I_1$, then $(-1)^{\ell + 1} F_{m - n} F_{n - \ell} = 1$.
Hence, we have that $m \in \{n - 2, n - 1, n + 1, n + 2\}$ and $\ell \in \{n - 2, n - 1\}$ (recall that $\ell < n$).
If $m = n - 2$, then either $\ell = n - 2$ and $n$ is even, or $\ell = n - 1$ and $n$ is odd.
This is case~\ref{ite:inverse-fibonacci1}.
If $m \in \{n - 1, n + 1, n + 2\}$, then either $\ell = n - 1$ and $n$ is even, or $\ell = n - 2$ and $n$ is odd.
This is case~\ref{ite:inverse-fibonacci2}.

If $m \in I_2$, then $(-1)^n F_\ell F_{m - 2n} = 1$.
Hence, $\ell = 2$ and $m \in \{2n-2, 2n-1, 2n+1, 2n+2\}$.
If $m = 2n - 2$, then $F_{m - 2n} = -1$ and consequently $n$ is odd, which is case~\ref{ite:inverse-fibonacci3}.
If $m \in \{2n - 1, 2n + 1, 2n + 2\}$, then $F_{m - 2n} = 1$ and consequently $n$ is even, which is case~\ref{ite:inverse-fibonacci4}.

If $m \in I_3$, then $(-1)^{\ell + n + 1} F_{m - 3n} F_{n - \ell} = 1$.
Hence, it follows that $\ell \in \{n - 2, n - 1\}$ and $m \in \{3n - 2, 3n - 1, 3n + 1, 3n + 2\}$.
If $\ell = n - 2$, then $(-1)^{\ell + n + 1} F_{n - \ell} = -1$ and consequently $m = 3n - 2$, which is case~\ref{ite:inverse-fibonacci5}.
If $\ell = n - 1$, then $(-1)^{\ell + n + 1} F_{n - \ell} = 1$ and consequently $m \in \{3n - 1, 3n + 1, 3n + 2\}$, which is case~\ref{ite:inverse-fibonacci6}.

If $m \in I_4$, then $F_\ell F_{m - 4n} = 1$.
Hence, we have that $\ell = 2$ and $m = 4n - 1$ (recall that $m < 4n$), which is case~\ref{ite:inverse-fibonacci7}.

At this point, we have proved that if~\eqref{equ:Fl-equal-to-inverse-Fm-mod-Fn} is true then the integers $\ell, m, n$ are of the form given by~\ref{ite:inverse-fibonacci1}--\ref{ite:inverse-fibonacci7}.

Vice versa, using~\eqref{equ:Blmn-cases}, on can easily verify that if $\ell, m, n$ are of the form given by~\ref{ite:inverse-fibonacci1}--\ref{ite:inverse-fibonacci7} then $B_{\ell, m, n} = 1$.
In turn, by~\eqref{equ:def-Blmn}, this implies that~\eqref{equ:Fl-equal-to-inverse-Fm-mod-Fn} is true.

The proof of Theorem~\ref{thm:inverse-fibonacci} is complete.

\section{Proof of Theorem~\ref{thm:inverse-lucas}}

With a bit of patience, one can check that Theorem~\ref{thm:inverse-lucas} holds for $n = 5$.
Hence, hereafter, assume that $n \geq 6$.

Suppose that~\eqref{equ:Ll-equal-to-inverse-Lm-mod-Ln} is satisfied.
Hence, we have that $L_\ell < L_n$ and
\begin{equation}\label{equ:Ll-Lm-C-Ln-1}
	L_\ell L_m - C L_n = 1 ,
\end{equation}
for some integer $C \geq 0$.
In particular, from $L_\ell < L_n$ it follows that $\ell < n$.
Moreover, since $m \geq 2$, we get that $C \geq 1$.
Consequently, we have that $L_\ell L_m \geq L_n + 1$.
Hence, by Lemma~\ref{lem:inequalities}\ref{ite:inequality-La-Lb}, we get that $m \geq n - \ell$.

Define the four disjoint intervals
\begin{equation*}
	J_1 := {[n - \ell, n + \ell)}, \quad J_2 := {[n + \ell, 3n - \ell)}, \quad
	J_3 := {[3n - \ell, 3n + \ell)}, \quad J_4 := {[3n + \ell, 4n]} .
\end{equation*}
By the previous considerations, we have that $m$ belongs to exactly one of such intervals.

For every integer $k$, put $L_k^+ := L_k$ if $k \geq 0$, and $L_k^+ := 0$ if $k < 0$.
Then, define
\begin{equation*}
	C_{\ell, m, n} := L_{\ell + m - n} + (-1)^\ell L_{-\ell + m - n}^+ - (-1)^n L_{\ell + m - 3n}^+ - (-1)^{\ell + n} L_{-\ell + m - 3n}^+
\end{equation*}
and
\begin{equation}\label{equ:def-Dlmn}
	D_{\ell, m, n} := L_\ell L_m - C_{\ell, m, n} L_n .
\end{equation}
Let us prove that
\begin{equation}\label{equ:Dlmn-cases}
	D_{\ell, m, n} =
		\begin{cases}
			(-1)^\ell 5 F_{m - n} F_{n - \ell} & \text{ if } m \in J_1 ; \\
			(-1)^{n + 1} L_\ell L_{m - 2n} & \text{ if } m \in J_2 ; \\
			(-1)^{\ell + n + 1} 5 F_{m - 3n} F_{n - \ell} & \text{ if } m \in J_3 ; \\
			L_\ell L_{m - 4n} & \text{ if } m \in J_4 .
		\end{cases}
\end{equation}

From Lemma~\ref{lem:identities}\ref{ite:LaLb-LcLd}, it follows that
\begin{equation}\label{equ:D-J1}
	L_\ell L_m - L_{\ell + m - n} L_n = (-1)^\ell 5 F_{m - n} F_{n - \ell} .
\end{equation}
Hence, we have that $D_{\ell, m, n} = (-1)^\ell 5 F_{m - n} F_{n - \ell}$ for every $m \in J_1$.

From~\eqref{equ:D-J1}, Lemma~\ref{lem:identities}\ref{ite:5FaFb-LcLd}, and~\eqref{equ:negative}, it follows that
\begin{align}\label{equ:D-J2}
	L_\ell& L_m - \big(L_{\ell + m - n} + (-1)^\ell L_{-\ell + m - n}\big) L_n = (-1)^\ell (5 F_{m - n} F_{n - \ell} - L_{-\ell + m - n} L_n) \\
	&= (-1)^{\ell + m + n + 1} L_{-\ell} L_{2n - m} = (-1)^{n + 1} L_{\ell} L_{m - 2n} . \nonumber
\end{align}
Hence, we have that $D_{\ell, m, n} = (-1)^{n + 1} L_{\ell} L_{m - 2n}$ for every $m \in J_2$.

From~\eqref{equ:D-J2} and Lemma~\ref{lem:identities}\ref{ite:LaLb-LcLd}, it follows that
\begin{align}\label{equ:D-J3}
	L_\ell& L_m - \big(L_{\ell + m - n} + (-1)^\ell L_{-\ell + m - n}  - (-1)^n L_{\ell + m - 3n}\big) L_n \\
	&= (-1)^{n+1}(L_{\ell} L_{m - 2n} - L_{\ell + m - 3n} L_n) = (-1)^{\ell+n+1} 5 F_{m - 3n} F_{n - \ell} . \nonumber
\end{align}
Hence, we have that $D_{\ell, m, n} = (-1)^{\ell+n+1} 5 F_{m - 3n} F_{n - \ell}$ for every $m \in J_3$.

Finally, from ~\eqref{equ:D-J3}, Lemma~\ref{lem:identities}\ref{ite:5FaFb-LcLd}, and~\eqref{equ:negative}, it follows that
\begin{align*}
	L_\ell& L_m - \big(L_{\ell + m - n} + (-1)^\ell L_{-\ell + m - n}  - (-1)^n L_{\ell + m - 3n} - (-1)^{\ell + n} L_{-\ell + m - 3n}\big) L_n \\
	&= (-1)^{\ell+n+1}(5 F_{m - 3n} F_{n - \ell} - L_{-\ell + m - 3n} L_n) =
	(-1)^{\ell+m} L_{-\ell} L_{4n - m} = L_{\ell} L_{m - 4n} . \nonumber
\end{align*}
Hence, we have that $D_{\ell, m, n} = L_{\ell} L_{m - 4n}$ for every $m \in J_4$.
The proof of \eqref{equ:Dlmn-cases} is complete.

Suppose that $C^\prime, D^\prime$ are integers such that
\begin{equation*}
    \left|\frac{L_\ell L_m}{L_n} - C^\prime\right| < 1 - \frac1{L_n} \quad\text{ and }\quad D^\prime = L_\ell L_m - C^\prime L_n .
\end{equation*}
Then, by \eqref{equ:Ll-Lm-C-Ln-1}, we have that
\begin{equation*}
    |C - C^\prime| \leq \left|C - \frac{L_\ell L_m}{L_n}\right| + \left|\frac{L_\ell L_m}{L_n} - C^\prime\right| < \frac1{L_n} + \left(1 - \frac1{L_n}\right) = 1 .
\end{equation*}
Consequently, we have that $C^\prime = C$ and, by \eqref{equ:Ll-Lm-C-Ln-1} again, that $D^\prime = 1$.

Hereafter, we will make use of such a fact several times, by taking $C^\prime = C_{\ell,m,n} + s$ and $D^\prime = D_{\ell,m,n} - s L_n$ for some $s \in \{-1,0,1\}$.

We will also use the fact that, for every integer $k$, we have that $|L_k| = 1$ if and only if $k \in \{-1, 1\}$.
Precisely, we have that $L_{-1} = -1$ and $L_1 = 1$.

If $m = n - \ell$, then from~\eqref{equ:Dlmn-cases}, \eqref{equ:negative}, and Lemma~\ref{lem:products}\ref{ite:product-5Fa-Fb} it follows that
\begin{equation*}
    D_{\ell,m,n} = (-1)^\ell 5 F_{-\ell} F_{n - \ell} = -5 F_\ell F_{n - \ell} = -L_n + (-1)^{n - \ell} L_{2\ell - n} .
\end{equation*}
Hence, by Lemma~\ref{lem:inequalities}\ref{ite:inequality-La}, we have that
\begin{equation*}
    \left|\frac{L_\ell L_m}{L_n} - (C_{\ell,m,n} - 1)\right| = \frac{|L_{2\ell - n}|}{L_n} < 1 - \frac1{L_n} ,
\end{equation*}
which implies that $(-1)^{n - \ell} L_{2\ell - n} = 1$.
Therefore, either $n - \ell$ is even and $2\ell - n = 1$, or $n - \ell$ is odd and $2\ell - n = -1$.
Recalling that $m = n - \ell$, it follows that $\ell = \tfrac1{2}(n \pm 1)$, $m = \tfrac1{2}(n \mp 1)$, and $n \equiv 1 \pmod 4$, which is case~\ref{ite:inverse-lucas1}.

If $m \in J_1 \setminus \{n - \ell\}$, then \eqref{equ:Dlmn-cases} and Lemma~\ref{lem:inequalities}\ref{ite:inequality-5Fa-Fb} yield that
\begin{equation*}
    \left|\frac{L_\ell L_m}{L_n} - C_{\ell,m,n}\right| = \frac{5|F_{m - n}F_{n - \ell}|}{L_n} < 1 - \frac1{L_n} ,
\end{equation*}
which implies that $(-1)^\ell 5 F_{m - n}F_{n - \ell} = 1$.
However, this last equality is clearly impossible.

If $m = n + \ell$, then \eqref{equ:Dlmn-cases} and Lemma~\ref{lem:products}\ref{ite:product-La-Lb} yield that
\begin{equation*}
    D_{\ell,m,n} = (-1)^{n + 1} L_{\ell} L_{\ell - n} = (-1)^{n + 1} L_{2\ell - n} - (-1)^\ell L_n .
\end{equation*}
Hence, by Lemma~\ref{lem:inequalities}\ref{ite:inequality-La}, we have that
\begin{equation*}
    \left|\frac{L_\ell L_m}{L_n} - \big(C_{\ell,m,n} - (-1)^\ell\big)\right| = \frac{|L_{2\ell - n}|}{L_n} < 1 - \frac1{L_n} ,
\end{equation*}
which implies that $(-1)^{n + 1} L_{2\ell - n} = 1$.
Therefore, either $n$ is odd and $2\ell - n = 1$, or $n$ is even and $2\ell - n = -1$.
However, the latter case is impossible.
Hence, recalling that $m = n + \ell$, we get that $\ell = \tfrac1{2}(n + 1)$, $m = \tfrac1{2}(3n + 1)$, and $n$ is odd, which is case~\ref{ite:inverse-lucas2}.

If $m \in J_2 \setminus \{n + \ell\}$ and $(\ell, m) \neq (n-1, 2n)$, then \eqref{equ:Dlmn-cases} and Lemma~\ref{lem:inequalities}\ref{ite:inequality-La-Lb} yield that
\begin{equation*}
    \left|\frac{L_\ell L_m}{L_n} - C_{\ell, m, n}\right| = \frac{|L_\ell L_{m - 2n}|}{L_n} < 1 - \frac1{L_n} ,
\end{equation*}
which implies that $(-1)^{n + 1} L_\ell L_{m - 2n} = 1$.
Thus $\ell = 1$ and either $m = 2n + 1$ and $n$ is odd, or $m = 2n - 1$ and $n$ is even.
This is case~\ref{ite:inverse-lucas3}.

If $\ell = n - 1$ and $m = 2n$ then, by~\eqref{equ:Dlmn-cases}, we get that
\begin{equation*}
	D_{\ell,m,n} = (-1)^{n + 1} 2 L_{n - 1} = (-1)^{n + 1} L_n + (-1)^{n + 1} L_{n - 3} .
\end{equation*}
Hence, by Lemma~\ref{lem:inequalities}\ref{ite:inequality-La}, we have that
\begin{equation*}
	\left|\frac{L_\ell L_m}{L_n} - \big(C_{\ell,m,n} + (-1)^{n+1}\big)\right| = \frac{|L_{n-3}|}{L_n} < 1 - \frac1{L_n} ,
\end{equation*}
which implies that $(-1)^{n + 1} L_{n - 3} = 1$, but this last equality is impossible.

If $m = 3n - \ell$, then \eqref{equ:Dlmn-cases}, \eqref{equ:negative}, and Lemma~\ref{lem:products}\ref{ite:product-5Fa-Fb} yield that
\begin{align*}
    D_{\ell, m, n} &= (-1)^{\ell + n + 1} 5F_{-\ell}F_{n - \ell} = (-1)^n 5F_{\ell}F_{n - \ell} = (-1)^n \big(L_n - (-1)^{n - \ell} L_{2\ell - n}\big) \\
    &= (-1)^n L_n - (-1)^{\ell} L_{2\ell - n} . \nonumber
\end{align*}
Hence, by Lemma~\ref{lem:inequalities}\ref{ite:inequality-La}, we have that
\begin{equation*}
    \left|\frac{L_\ell L_m}{L_n} - \big(C_{\ell, m, n} + (-1)^n\big)\right| = \frac{|L_{2\ell - n}|}{L_n} < 1 - \frac1{L_n} ,
\end{equation*}
which implies that $(-1)^{\ell + 1} L_{2\ell - n} = 1$.
Therefore, either $\ell$ is even and $2\ell - n = -1$, or $\ell$ is odd and $2\ell - n = 1$.
That is, we have that $n \equiv 1 \pmod 4$ and either $\ell = \tfrac1{2}(n - 1)$ and $m = \tfrac1{2}(5n + 1)$, or $\ell = \tfrac1{2}(n + 1)$ and $m = \tfrac1{2}(5n - 1)$.
These are cases~\ref{ite:inverse-lucas4} and \ref{ite:inverse-lucas5}.

If $m \in J_3 \setminus \{3n - \ell\}$, then \eqref{equ:Dlmn-cases} and Lemma~\ref{lem:inequalities}\ref{ite:inequality-5Fa-Fb} yield that
\begin{equation*}
    \left|\frac{L_\ell L_m}{L_n} - C_{\ell, m, n}\right| = \frac{|5F_{m-3n}F_{n - \ell}|}{L_n} < 1 - \frac1{L_n} ,
\end{equation*}
which implies that $(-1)^{\ell + n + 1} 5F_{m-3n}F_{n - \ell} = 1$, but this last equality is impossible.

If $m = 3n + \ell$, then \eqref{equ:Dlmn-cases} and Lemma~\ref{lem:products}\ref{ite:product-La-Lb} yield that
\begin{equation*}
    D_{\ell, m, n} = L_\ell L_{\ell - n} = L_{2\ell - n} + (-1)^{\ell + n} L_n .
\end{equation*}
Hence, by Lemma~\ref{lem:inequalities}\ref{ite:inequality-La}, we get that
\begin{equation*}
    \left|\frac{L_\ell L_m}{L_n} - \big(C_{\ell, m, n} + (-1)^{\ell + n})\right| = \frac{|L_{2\ell - n}|}{L_n} < 1 - \frac1{L_n} ,
\end{equation*}
which implies that $L_{2\ell - n} = 1$.
Hence, we get that $2\ell - n = 1$.
Recalling that $m = 3n + \ell$, it follows that $\ell = \tfrac1{2}(n + 1)$, $m = \tfrac1{2}(7n + 1)$, and $n$ is odd, which is case \ref{ite:inverse-lucas6}.

If $m \in J_4 \setminus \{3n + \ell, 4n\}$ and $(\ell, m) \neq (n - 1, 4n)$, then \eqref{equ:Dlmn-cases} and Lemma~\ref{lem:inequalities}\ref{ite:inequality-La-Lb} yield that
\begin{equation*}
    \left|\frac{L_\ell L_m}{L_n} - C_{\ell, m, n}\right| = \frac{|L_\ell L_{m - 4n}|}{L_n} < 1 - \frac1{L_n} ,
\end{equation*}
which implies that $L_\ell L_{m - 4n} = 1$.
Hence, we have that $\ell = 1$ and $m = 4n + 1$, which is impossible, since $m \leq 4n$.

If $\ell = n - 1$ and $m = 4n$ then, by~\eqref{equ:Dlmn-cases}, we get that
\begin{equation*}
	D_{\ell,m,n} = 2 L_{n - 1} = L_n + L_{n - 3} .
\end{equation*}
Hence, by Lemma~\ref{lem:inequalities}\ref{ite:inequality-La}, we have that
\begin{equation*}
	\left|\frac{L_\ell L_m}{L_n} - (C_{\ell,m,n} + 1)\right| = \frac{|L_{n-3}|}{L_n} < 1 - \frac1{L_n} ,
\end{equation*}
which implies that $L_{n - 3} = 1$, but this last equality is impossible.

At this point, we have proved that if~\eqref{equ:Ll-equal-to-inverse-Lm-mod-Ln} is true then the integers $\ell, m, n$ are of the form given by~\ref{ite:inverse-lucas1}--\ref{ite:inverse-lucas6}.

Vice versa, using~\eqref{equ:Dlmn-cases}, on can easily verify that if $\ell, m, n$ are of the form given by~\ref{ite:inverse-lucas1}--\ref{ite:inverse-lucas6} then $D_{\ell, m, n} = 1$.
In turn, by~\eqref{equ:def-Dlmn}, this implies that~\eqref{equ:Ll-equal-to-inverse-Lm-mod-Ln} is true.

The proof of Theorem~\ref{thm:inverse-lucas} is complete.

\end{document}